\numberwithin{equation}{section} \DeclareMathSizes{2}{10}{12}{13}
\newcommand*{\doublerightarrow}[2]{\mathrel{
  \settowidth{\@tempdima}{$\scriptstyle#1$}
  \settowidth{\@tempdimb}{$\scriptstyle#2$}
  \ifdim\@tempdimb>\@tempdima \@tempdima=\@tempdimb\fi
  \mathop{\vcenter{
    \offinterlineskip\ialign{\hbox to\dimexpr\@tempdima+1em{##}\cr
    \rightarrowfill\cr\noalign{\kern.5ex}
    \rightarrowfill\cr}}}\limits^{\!#1}_{\!#2}}}
\newcommand{\leftrarrows}{\mathrel{\raise.75ex\hbox{\oalign{%
  $\scriptstyle\leftarrow$\cr
  \vrule width0pt height.5ex$\hfil\scriptstyle\relbar$\cr}}}}
\newcommand{\lrightarrows}{\mathrel{\raise.75ex\hbox{\oalign{%
  $\scriptstyle\relbar$\hfil\cr
  $\scriptstyle\vrule width0pt height.5ex\smash\rightarrow$\cr}}}}
\newcommand{\Rrelbar}{\mathrel{\raise.75ex\hbox{\oalign{%
  $\scriptstyle\relbar$\cr
  \vrule width0pt height.5ex$\scriptstyle\relbar$}}}}
\def\leftrightarrowsfill@{\arrowfill@\leftrarrows\Rrelbar\lrightarrows}
\newcommand{\xleftrightarrows}[2][]{\ext@arrow 3399\leftrightarrowsfill@{#1}{#2}}
\newtheorem{thm}{Proposition}[section]
\newtheorem{Thm}[thm]{Theorem}
\newtheorem{lem}[thm]{Lemma}
\newtheorem{defn}[thm]{Definition}
\title{Adjunctions between Eilenberg-Moore categories and a PBW-type theorem}
\author{Mamta Balodi\footnote{Stat-Math Unit, Indian Statistical Institute, Bangalore. Email: mamta.balodi@gmail.com} $\qquad\qquad$ Abhishek Banerjee\footnote{Department of Mathematics, Indian Institute of Science, Bangalore. Email: abhishekbanerjee1313@gmail.com} $\qquad\qquad$ Anita Naolekar \footnote{Stat-Math Unit, Indian Statistical Institute, Bangalore. Email: anita.naolekar@gmail.com}}
\date{}
\begin{document}

\maketitle 

\medskip

\begin{abstract}  Recently, Dotsenko and Tamaroff have shown that a morphism of $T\longrightarrow S$ of monads over a category $\mathscr C$ satisfies the PBW-property if and only if it makes $S$ into a free right $T$-module. We consider an adjunction $\Psi=(G,F)$ between categories $\mathscr C$, $\mathscr D$, a monad $S$ on $\mathscr C$ and a monad $T$ on $\mathscr D$. We show that a morphism $\phi:(\mathscr C,S)\longrightarrow (\mathscr D,T)$ that is well behaved with respect to the adjunction $\Psi$ has a PBW-property if and only if it makes $S$ satisfy a certain freeness condition with respect to $T$-modules with values in $\mathscr C$.
\end{abstract}

\smallskip

{\bf MSC(2020) Subject Classification:} 16D90, 18C20  

\smallskip
{\bf Keywords:} Monads, PBW-theorem, Eilenberg-Moore category

\smallskip

\section{Introduction}

Let $\mathscr C$ be a category and let $\phi: T\longrightarrow S$ be a morphism of monads on $\mathscr C$. Let $\mathscr C^S$ and $\mathscr C^T$ denote respectively the Eilenberg-Moore categories with respect to $S$ and $T$. Then, if $x\in \mathscr C$ carries the structure of an $S$-algebra, given by a structure map $\lambda_x:Sx\longrightarrow x$, the composition
$Tx\xrightarrow{\phi x}Sx\xrightarrow{\lambda_x}x$ gives $x
$ the structure of a $T$-algebra. This determines a functor $\phi_*:\mathscr C^S\longrightarrow \mathscr C^T$. Moreover, this functor $\phi^*$ has a left adjoint
\begin{equation}
\phi^!:\mathscr C^T\longrightarrow \mathscr C^S
\end{equation} which may be viewed as a ``universal enveloping algebra.'' In \cite{Dot}, Dotsenko and Tamaroff prove a striking result; they say that a morphism of monads on $\mathscr C$ has the PBW property if the underlying object of the universal enveloping algebra of an object in $\mathscr C^T$ does not depend on its $T$-algebra structure. Then, Dotsenko and Tamaroff \cite{Dot} show that, if the Eilenberg-Moore categories contain reflexive coequalizers, $\phi:T\longrightarrow S$ satisfies the PBW-property if and only if $\phi$ makes $S$ into a free right $T$-module.

\smallskip
In our situation, we start with an adjunction $\Psi=(G,F)$ between categories $\mathscr C$ and $\mathscr D$. We consider a monad $S$ on $\mathscr C$ and a monad $T$ on $\mathscr D$. By a $\Psi$-morphism  $\phi=(\phi_G,\phi_F):(\mathscr C,\mathscr S)\longrightarrow (\mathscr D,T)$, we mean a pair of natural transformations $\phi_G:GT\longrightarrow TS$ and
$\phi_F:TF\longrightarrow FS$ satisfying certain condition that make the ``morphism of monads'' compatible with the adjunction $\Psi=(G,F)$. We then define a PBW-property similar to that of \cite{Dot}. Our aim is to show that a $\Psi$-morphism has the PBW-property if and only if it satisfies a certain freeness condition with respect to $T$-modules with values in $\mathscr C$.

\section{Adjunctions between Eilenberg-Moore categories}

We  begin by fixing some notation. Suppose that $(G,F)$ is a pair of adjoint functors between categories $\mathscr A$ and $\mathscr B$. Accordingly, for objects $a\in \mathscr A$ and $b\in \mathscr B$, we have an isomorphism
\begin{equation}
\mathscr B(Ga,b)\cong \mathscr A(a,Fb)
\end{equation} For a morphism $g\in \mathscr B(Fa,b)$, we denote by $g^R$ the corresponding element in $\mathscr A(a,Gb)$. Similarly, for any $f\in \mathscr A(a,Fb)$, we will denote by $f^L$ the corresponding element in $\mathscr B(Ga,b)$. 

\smallskip
A monad on a category $\mathscr C$ consists of an endofunctor $S:\mathscr C\longrightarrow \mathscr C$ along with natural transformations
$\mu(S):S^2\longrightarrow S$ and $\eta(S):1\longrightarrow S$ satisfying the usual associativity and unit conditions. We will  denote this monad as a pair
$(\mathscr C,S)$. A map of such pairs $(\mathscr C,S)\longrightarrow (\mathscr D,T)$ consists of a functor $F:\mathscr C\longrightarrow \mathscr D$ along with a natural transformation $ \phi_F:TF\longrightarrow FS$ such that the following diagrams commute
\begin{equation}\label{2.2cd}
\begin{array}{lll}
\begin{CD}
\small \xymatrix{
T^2F \ar[r]^{T \phi_F}\ar[d]_{\mu(T)F}& TFS\ar[r]^{ \phi_F S} & FS^2 \ar[d]^{F\mu(S)}\\
TF\ar[rr]^{ \phi_F}&& FS\\
}\end{CD} &\qquad \qquad & 
\begin{CD}
\small \xymatrix{
&F\ar[dl]_{\eta(T)F}\ar[dr]^{F\eta(S)}& \\
TF\ar[rr]^{\phi_F}&&FS\\
}
\end{CD}\\
\end{array}
\end{equation} An $S$-algebra, or more precisely, an algebra over the monad $(\mathscr C,S)$ consists of an object $x\in\mathscr C$ along with a morphism $\lambda_x: Sx\longrightarrow x$ which satisfies associativity and unit conditions with respect to the action of $S$, i.e., $\lambda_x\circ \mu(S)x=\lambda_x\circ (S\lambda_x)$ and $1_x=\lambda_x\circ \eta(S)x$. The  category of algebras over the monad $(\mathscr C,S)$ is the Eilenberg-Moore category which we denote by $\mathscr C^S$. A morphism $f:(x,\lambda_x)\longrightarrow (x',\lambda_{x'})$ in $\mathscr C^S$  is given by 
$f:x\longrightarrow x'$ such that $\lambda_{x'}\circ Sf=f\circ \lambda_x$.

\smallskip
Given a monad $\mathscr C^S$, we also denote by $\rho(S):\mathscr C\longrightarrow \mathscr C^S$ the free functor which is left adjoint to the forgetful functor $\pi(S):\mathscr C^S\longrightarrow S$. The following result is well known.

\begin{lem}\label{L2.1}
Let $(F,\phi_F):(\mathscr C,S)\longrightarrow (\mathscr D,T)$ be a morphism of monads. Then, there is a functor $\hat\phi_F:\mathscr C^S\longrightarrow 
\mathscr D^T$ that fits into the following commutative diagram
\begin{equation}\label{2.3cdq}
\begin{CD}
\mathscr C^S @>\hat\phi_F>> \mathscr D^T \\
@V\pi(S)VV @VV\pi(T)V \\
\mathscr C @>F>> \mathscr D\\
\end{CD}
\end{equation}
\end{lem}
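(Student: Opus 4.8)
The plan is to construct $\hat\phi_F$ explicitly, on objects and on morphisms, directly from the data $(F,\phi_F)$, and then verify the three properties it must have: that each object is sent to a genuine $T$-algebra, that each $S$-algebra morphism is sent to a $T$-algebra morphism, and that the square \eqref{2.3cdq} commutes; functoriality will then be automatic since $\hat\phi_F$ agrees with $F$ on underlying objects and morphisms. Concretely, for an object $(x,\lambda_x)$ of $\mathscr C^S$ I set $\hat\phi_F(x,\lambda_x):=(Fx,\tilde\lambda_x)$, where $\tilde\lambda_x$ is the composite $TFx\xrightarrow{\phi_F x}FSx\xrightarrow{F\lambda_x}Fx$, and for a morphism $f\colon(x,\lambda_x)\to(x',\lambda_{x'})$ I set $\hat\phi_F(f):=Ff$. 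Everything then comes down to diagram chases combining the two commuting diagrams of \eqref{2.2cd}, naturality of $\phi_F$, and the $S$-algebra axioms.

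First I would check that $(Fx,\tilde\lambda_x)$ is a $T$-algebra. For the unit axiom, composing $\tilde\lambda_x$ with $\eta(T)Fx$ and using the right-hand triangle of \eqref{2.2cd} at $x$, namely $\phi_F x\circ\eta(T)Fx=F(\eta(S)x)$, reduces the claim to $F$ applied to the $S$-algebra unit identity $\lambda_x\circ\eta(S)x=1_x$. For the associativity axiom I would expand $\tilde\lambda_x\circ T\tilde\lambda_x=F\lambda_x\circ\phi_F x\circ T(F\lambda_x)\circ T(\phi_F x)$, use naturality of $\phi_F$ at the morphism $\lambda_x\colon Sx\to x$ to replace $\phi_F x\circ T(F\lambda_x)$ by $F(S\lambda_x)\circ\phi_F(Sx)$, then rewrite $F\lambda_x\circ F(S\lambda_x)$ as $F\lambda_x\circ F(\mu(S)x)$ using $F$ applied to the $S$-algebra associativity identity, and finally apply the left-hand square of \eqref{2.2cd} at $x$ to recognise the result as $F\lambda_x\circ\phi_F x\circ\mu(T)Fx=\tilde\lambda_x\circ\mu(T)Fx$.

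Next I would verify that $Ff$ is a morphism of $T$-algebras: the identity $\tilde\lambda_{x'}\circ T(Ff)=Ff\circ\tilde\lambda_x$ follows from naturality of $\phi_F$ at $f$ together with $F$ applied to the condition $\lambda_{x'}\circ Sf=f\circ\lambda_x$ defining a morphism in $\mathscr C^S$. Functoriality of $\hat\phi_F$ is then inherited from that of $F$, and commutativity of \eqref{2.3cdq} is immediate from the construction, since $\pi(T)\hat\phi_F(x,\lambda_x)=Fx=F\pi(S)(x,\lambda_x)$ on objects and similarly on morphisms. The only step that requires any care is the associativity chase, where one must keep track of which of $T$, $F$, $S$ is applied to which object at each stage; beyond this bookkeeping there is no real obstacle, and indeed the statement is standard.
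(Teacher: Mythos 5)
Your proof is correct and takes exactly the approach of the paper: define $\hat\phi_F(x,\lambda_x)=(Fx,\,F\lambda_x\circ\phi_F x)$ and $\hat\phi_F(f)=Ff$, with the paper leaving the $T$-algebra verifications as routine checks that you carry out explicitly (and correctly). Nothing further is needed.
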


\begin{proof}
We consider $(y,\lambda_y)\in \mathscr C^S$. It may be verified that the composition
\begin{equation}
\begin{CD}\lambda_{Fy}:TFy @>\phi_F(y)>> FSy @>F\lambda_y>> Fy 
\end{CD} \end{equation}
gives $Fx$ the structure of an algebra over $(\mathscr D,T)$. This defines a functor $\hat\phi_F:\mathscr C^S\longrightarrow 
\mathscr D^T$ that makes the diagram \eqref{2.3cdq} commute.

\end{proof}

We now fix a pair $\Psi=(G:\mathscr D\longrightarrow \mathscr C,F:\mathscr C\longrightarrow \mathscr D)$ of adjoint functors between $\mathscr D$ and $\mathscr C$. Our aim is to construct a commutative diagram of left adjoints corresponding to the commutative square of right adjoints in \eqref{2.3cdq}.

\begin{defn}\label{D2.2} Let $(\mathscr C,S)$ and $(\mathscr D,T)$ be monads and $\Psi=(G,F)$ be an adjunction between $\mathscr D$ and $\mathscr C$. A $\Psi$-morphism  of monads is a pair $\phi=(\phi_G,\phi_F)$ such that:

\smallskip
(1) $\phi_G: GT\longrightarrow SG$ is a natural transformation which satisfies
\begin{equation}\label{2.1dcr}
\begin{array}{lll}
\begin{CD}
\small \xymatrix{
GT^2 \ar[r]^{\phi_GT}\ar[d]_{G\mu(T)}& SGT\ar[r]^{ S\phi_G } & S^2G \ar[d]^{\mu(S)G}\\
GT\ar[rr]^{ \phi_G}&& SG\\
}\end{CD} &\qquad \qquad & 
\begin{CD}
\small \xymatrix{ G \ar[rr]^{G\eta(T)} \ar[drr]_{\eta(S)G}& & GT \ar[d]^{\phi_G}\\
&& SG\\
}\end{CD}\\
\end{array}
\end{equation}

\smallskip
(2) $\phi_F:TF\longrightarrow FS$ is a natural transformation which satisfies
\begin{equation}\label{2.2dcr}
\begin{array}{lll}
\begin{CD}
\small \xymatrix{
T^2F \ar[r]^{T \phi_F}\ar[d]_{\mu(T)F}& TFS\ar[r]^{ \phi_F S} & FS^2 \ar[d]^{F\mu(S)}\\
TF\ar[rr]^{ \phi_F}&& FS\\
}\end{CD} &\qquad \qquad & 
\begin{CD}
\small \xymatrix{
&F\ar[dl]_{\eta(T)F}\ar[dr]^{F\eta(S)}& \\
TF\ar[rr]^{\phi_F}&&FS\\
}
\end{CD}\\
\end{array}
\end{equation} 

(3) For any morphism $f: Gx\longrightarrow y$ in $\mathscr C$, the following two equivalent conditions are satisfied 
\begin{equation}\label{2.3dcr}
\begin{array}{lll}
\begin{CD}
GTx @>GT(f^R)>> GTFy \\
@V\phi_G(x)VV @VV\phi_F(y)^LV \\
SGx @>Sf>> Sy \\
\end{CD} &\qquad \Leftrightarrow \qquad & \begin{CD}
Tx @>T(f^R)>> TFy \\
@V\phi_G(x)^RVV @VV\phi_F(y)V \\
FSGx @>FSf>> FSy \\
\end{CD} \\
\end{array}
\end{equation}

\end{defn}

\begin{lem}
\label{L2.3} Let $\phi=(\phi_G,\phi_F):(\mathscr C,S)\longrightarrow (\mathscr D,T)$ be a $\Psi$-morphism of monads. Let $(x,\lambda_x)\in \mathscr D^T$. Then, the following two compositions are identity
\begin{equation}\label{2.8eq}
SGx \xrightarrow{\qquad SG\eta(T)x\qquad }SGTx\doublerightarrow{(\mu(S)Gx)\circ (S\phi_G(x))}{\qquad SG\lambda_x\qquad} SGx
\end{equation}
\end{lem}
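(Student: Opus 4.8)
The plan is to verify the two assertions separately; each is a short diagram chase, and neither needs condition~(3) of Definition~\ref{D2.2} nor the transformation $\phi_F$ --- only the unit data. I would remark at the outset that this lemma isolates exactly the part of a $\Psi$-morphism that is governed by $\phi_G$ and the units.

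\emph{The lower arrow.} Since $(x,\lambda_x)$ is a $T$-algebra, the unit law gives $\lambda_x\circ\eta(T)x=1_x$ in $\mathscr D$. I would apply the functor $G$ to obtain $G\lambda_x\circ G\eta(T)x=1_{Gx}$ in $\mathscr C$, and then apply $S$ to obtain $SG\lambda_x\circ SG\eta(T)x=1_{SGx}$, which is precisely the claim for the composite through $SG\lambda_x$.

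\emph{The upper arrow.} Here I would first whisker the right-hand triangle of \eqref{2.1dcr} at the object $x$, which reads $\phi_G(x)\circ G\eta(T)x=\eta(S)Gx\colon Gx\to SGx$. Applying $S$ gives $S\phi_G(x)\circ SG\eta(T)x=S\eta(S)Gx$, and composing on the left with $\mu(S)Gx$ yields
\[
(\mu(S)Gx)\circ(S\phi_G(x))\circ(SG\eta(T)x)=(\mu(S)Gx)\circ(S\eta(S)Gx).
\]
The right-hand side equals $1_{SGx}$ by the unit axiom $\mu(S)\circ S\eta(S)=1_S$ of the monad $S$, whiskered at $Gx$, which finishes the argument.

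There is no real obstacle: the whole statement is a bookkeeping exercise with natural transformations, and the only point requiring attention is to whisker each identity ($T$-algebra unit law, the $\phi_G$-triangle of \eqref{2.1dcr}, and the $S$-monad unit law) at the correct object before composing and applying $S$ or $G$.
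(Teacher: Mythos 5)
Your proof is correct and follows essentially the same route as the paper: whisker the unit triangle of \eqref{2.1dcr} at $x$, apply $S$, and use the monad unit law $\mu(S)\circ S\eta(S)=1_S$ for the top arrow, while the bottom arrow is just $SG$ applied to the $T$-algebra unit law. Nothing to add.
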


\begin{proof}
From the condition \eqref{2.1dcr}, we observe that
\begin{equation}\label{2.9fo}
\begin{array}{lll}
\begin{CD}
\small \xymatrix{ Gx \ar[rr]^{G\eta(T)(x)} \ar[drr]_{\eta(S)G(x)}& & GTx \ar[d]^{\phi_G(x)}\\
&& SGx\\
}
\end{CD}
& \qquad\Rightarrow \qquad & 
\begin{CD}
\small \xymatrix{ SGx \ar[rr]^{SG\eta(T)(x)} \ar[drr]_{S\eta(S)G(x)}& & SGTx \ar[d]^{S\phi_G(x)}\\
&& SSGx\\
}
\end{CD} \\
\end{array}
\end{equation} Since $(\mu(S)Gx)\circ (S\eta(S)Gx)=id$, it follows from \eqref{2.9fo} that $(\mu(S)Gx)\circ (S\phi_G(x))\circ ( SG\eta(T)x)=id$. Additionally, since $(x,\lambda_x)$ is an algebra over $(\mathscr D,T)$, we have $(SG\lambda_x)\circ (SG\eta(T)x)=SG(\lambda_x\circ \eta(T)x)=id$.
\end{proof}

From now onwards, we assume that the Eilenberg-Moore categories $\mathscr C^S$ and $\mathscr D^T$ both contain reflexive coequalizers. For more on this condition, we refer the reader to \cite{AK}, \cite[$\S$ 9.3]{BW} and \cite{Lint}. 

\begin{lem}\label{L2.4}
Let $\phi=(\phi_G,\phi_F):(\mathscr C,S)\longrightarrow (\mathscr D,T)$ be a $\Psi$-morphism of monads. Let $(x,\lambda_x)\in \mathscr D^T$. Then, the coequalizer 
\begin{equation}\label{coe210}
Coeq\left(SGTx\doublerightarrow{(\mu(S)Gx)\circ (S\phi_G(x))}{\qquad SG\lambda_x\qquad} SGx\right)
\end{equation} in $\mathscr C$ is equipped with the structure of an $S$-algebra. This determines a functor $\hat\phi_G: \mathscr D^T
\longrightarrow \mathscr C^S$.
\end{lem}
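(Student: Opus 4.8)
The plan is to construct the $S$-algebra structure on the coequalizer \eqref{coe210} by the standard reflexive-coequalizer technique: one shows that applying the monad $S$ to the parallel pair yields again a coequalizer, so that a structure map descends. Concretely, write $C(x)$ for the coequalizer \eqref{coe210} and $q_x : SGx \to C(x)$ for the canonical projection. First I would observe that the parallel pair appearing in \eqref{coe210} is reflexive: the common section is $SG\eta(T)x : SGx \to SGTx$, and Lemma \ref{L2.3} is exactly the statement that both legs compose with this section to the identity, so the hypothesis that $\mathscr C^S$ has reflexive coequalizers applies. In fact it is cleaner to form the coequalizer in $\mathscr C^S$ of the free $S$-algebras $\rho(S)(GTx) \doublerightarrow{}{} \rho(S)(Gx)$, where the two maps are the $S$-algebra morphisms adjoint to $SG\lambda_x$ and to $(\mu(S)Gx)\circ(S\phi_G(x)) \colon SGTx \to SGx$ (the latter is a map of $S$-algebras from $\rho(S)(GTx)$ precisely because of the left square in \eqref{2.1dcr}, which says $S\phi_G$ is compatible with multiplication). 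Since $\pi(S)$ creates reflexive coequalizers over a category with such coequalizers, the underlying object of this coequalizer in $\mathscr C^S$ is exactly $C(x)$, which endows $C(x)$ with its $S$-algebra structure $\lambda_{C(x)} : SC(x) \to C(x)$.

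Next I would promote the assignment $x \mapsto C(x)$ to a functor $\hat\phi_G : \mathscr D^T \to \mathscr C^S$. Given a morphism $h : (x,\lambda_x) \to (x',\lambda_{x'})$ in $\mathscr D^T$, one gets induced maps $SGh : SGx \to SGx'$ and $SGTh : SGTx \to SGTx'$, and I would check that these intertwine the two parallel pairs: compatibility with the $SG\lambda$-legs is immediate from $\lambda_{x'}\circ Th = h\circ\lambda_x$ (functoriality of $SG$), and compatibility with the $(\mu(S)Gx)\circ(S\phi_G(x))$-legs follows from naturality of $\phi_G$ in its argument. The universal property of the coequalizer then yields a unique map $C(h) : C(x) \to C(x')$ with $C(h)\circ q_x = q_{x'}\circ SGh$; uniqueness gives functoriality $C(h'h) = C(h')C(h)$ and $C(\mathrm{id}) = \mathrm{id}$ for free. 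That $C(h)$ is a morphism of $S$-algebras is again automatic because the whole construction was carried out in $\mathscr C^S$, or alternatively can be checked directly using that $q_x$ is $S$-equivariant and epi.

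The main obstacle — really the only nonformal point — is verifying that the second leg $(\mu(S)Gx)\circ(S\phi_G(x)) : SGTx \to SGx$ underlies a genuine morphism of free $S$-algebras $\rho(S)(GTx) \to \rho(S)(Gx)$, equivalently that after applying $S$ the diagram still coequalizes compatibly with the $S$-actions; this is where the coherence axioms for $\phi_G$ in \eqref{2.1dcr} (associativity square) and the monad associativity of $S$ are used, via a short diagram chase with $\mu(S)$. Everything else — reflexivity, descent of the structure map, well-definedness and functoriality of $\hat\phi_G$ — is a routine application of the hypothesis that $\pi(S) : \mathscr C^S \to \mathscr C$ creates reflexive coequalizers together with naturality. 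I would present the argument by first recording the reflexive-coequalizer lift, then defining $\lambda_{C(x)}$, then defining $\hat\phi_G$ on morphisms, and finally noting compatibility with $\pi(S)$, i.e. $\pi(S)\circ\hat\phi_G$ sends $(x,\lambda_x)$ to $C(x)$, which will be needed for the adjunction statement to follow.
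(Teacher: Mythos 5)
Your proposal follows essentially the same route as the paper: view the parallel pair $SGTx\rightrightarrows SGx$ as morphisms between the free $S$-algebras $\rho(S)(GTx)$ and $\rho(S)(Gx)$, use Lemma \ref{L2.3} to see that $SG\eta(T)x$ is a common section (so the pair is reflexive), invoke the standing hypothesis that $\mathscr C^S$ has reflexive coequalizers, and get functoriality from the universal property exactly as you describe. Two points, however, should be corrected. First, what you single out as the ``only nonformal point'' is in fact automatic: $(\mu(S)Gx)\circ(S\phi_G(x))$ is the free extension (adjoint transpose under the $\rho(S)\dashv\pi(S)$ adjunction) of $\phi_G(x):GTx\to SGx$, hence a morphism $\rho(S)(GTx)\to\rho(S)(Gx)$ by naturality of $\mu(S)$ and associativity of the monad $S$ alone; the square in \eqref{2.1dcr} plays no role here (the unit triangle of \eqref{2.1dcr} enters only through Lemma \ref{L2.3}, to produce the reflexion).

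Second, and more seriously, the claim that $\pi(S)$ \emph{creates} reflexive coequalizers is false in general: the forgetful functor creates coequalizers of $\pi(S)$-split pairs, and it creates or preserves reflexive coequalizers only under additional hypotheses (for instance when $S$ preserves them). Nothing of the sort is assumed in the paper, which only assumes that $\mathscr C^S$ \emph{has} reflexive coequalizers; accordingly, the paper's proof takes the coequalizer of the reflexive pair inside $\mathscr C^S$ and lets that $S$-algebra be $\hat\phi_G(x,\lambda_x)$ (the phrase ``coequalizer in $\mathscr C$'' in \eqref{coe210} is to be read with this construction in mind, and its identification with the underlying-object coequalizer is not what the later arguments use). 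If you drop the creation claim and simply define $\hat\phi_G$ via the coequalizer formed in $\mathscr C^S$, the rest of your argument — reflexivity via Lemma \ref{L2.3}, descent of the structure map, and the induced action on morphisms — coincides with the paper's proof.
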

\begin{proof}
We observe that each of the objects in \eqref{2.8eq} is canonically equipped with the structure of an $S$-algebra and each arrow in
\eqref{2.8eq} is a morphism of $S$-algebras. Applying Lemma \ref{L2.3}, it follows that the coequalizer in \eqref{coe210} is a reflexive coequalizer. By assumption, $\mathscr C^S$ contains reflexive coequalizers and the result follows.
\end{proof}

\begin{thm}\label{P2.5} Let $\Psi=(G:\mathscr D\longrightarrow \mathscr C,F:\mathscr C\longrightarrow \mathscr D)$ be a pair of adjoint functors between $\mathscr D$ and $\mathscr C$. Let $\phi=(\phi_G,\phi_F):(\mathscr C,S)\longrightarrow (\mathscr D,T)$ be a $\Psi$-morphism of monads.  Then, $\hat\phi_G: \mathscr D^T
\longrightarrow \mathscr C^S$ is left adjoint to the functor  $\hat\phi_F$ and fits into the following commutative diagram
\begin{equation}\label{com211}
\begin{CD}
\mathscr C^S @<\hat\phi_G<< \mathscr D^T \\
@A\rho(S)AA @AA\rho(T)A\\
\mathscr C @<G<< \mathscr D \\
\end{CD}
\end{equation} 

\end{thm}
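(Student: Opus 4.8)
The goal is to exhibit, naturally in $(x,\lambda_x)\in\mathscr D^T$ and $(y,\lambda_y)\in\mathscr C^S$, a bijection
\[ \mathscr C^S\big(\hat\phi_G(x,\lambda_x),(y,\lambda_y)\big)\;\cong\;\mathscr D^T\big((x,\lambda_x),\hat\phi_F(y,\lambda_y)\big), \]
and then to deduce the commutativity of \eqref{com211} by a short separate computation. First I would recast the construction of Lemma \ref{L2.4} in terms of free algebras: the map $(\mu(S)Gx)\circ(S\phi_G(x)):SGTx\to SGx$ is precisely the $\rho(S)\dashv\pi(S)$-transpose of $\phi_G(x):GTx\to\pi(S)\rho(S)(Gx)$, hence a morphism of free $S$-algebras $\rho(S)(GTx)\to\rho(S)(Gx)$, and $SG\lambda_x=\rho(S)(G\lambda_x)$ is such a morphism as well. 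By Lemma \ref{L2.3} this parallel pair is reflexive, with common section $SG\eta(T)x=\rho(S)(G\eta(T)x)$. Thus $\hat\phi_G(x,\lambda_x)$ is the coequalizer, formed in $\mathscr C^S$, of a reflexive pair of free $S$-algebras $\rho(S)(GTx)\rightrightarrows\rho(S)(Gx)$.

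Applying $\mathscr C^S(-,(y,\lambda_y))$ turns this coequalizer into an equalizer, and the adjunction $\rho(S)\dashv\pi(S)$ rewrites the two induced maps on $\mathscr C(Gx,y)$ as $k\mapsto k\circ G\lambda_x$ and $k\mapsto\lambda_y\circ Sk\circ\phi_G(x)$ (both landing in $\mathscr C(GTx,y)$); hence
\[ \mathscr C^S\big(\hat\phi_G(x,\lambda_x),(y,\lambda_y)\big)\;\cong\;\big\{\,k\in\mathscr C(Gx,y)\;:\;k\circ G\lambda_x=\lambda_y\circ Sk\circ\phi_G(x)\,\big\}. \]
On the other hand, Lemma \ref{L2.1} gives $\hat\phi_F(y,\lambda_y)=(Fy,\,F\lambda_y\circ\phi_F(y))$, so by the definition of morphisms in $\mathscr D^T$,
\[ \mathscr D^T\big((x,\lambda_x),\hat\phi_F(y,\lambda_y)\big)\;=\;\big\{\,g\in\mathscr D(x,Fy)\;:\;g\circ\lambda_x=F\lambda_y\circ\phi_F(y)\circ Tg\,\big\}. \]

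The adjunction $\Psi=(G,F)$ now gives a bijection $\mathscr C(Gx,y)\cong\mathscr D(x,Fy)$, $k\mapsto k^R=:g$. By naturality of $\Psi$, the $R$-transpose of $k\circ G\lambda_x$ equals $g\circ\lambda_x$; and condition \eqref{2.3dcr} applied with $f=k$ (so $f^R=g$) gives $Sk\circ\phi_G(x)=\phi_F(y)^L\circ GTg$, so that $\lambda_y\circ Sk\circ\phi_G(x)=\lambda_y\circ\phi_F(y)^L\circ GTg$, whose $R$-transpose is $F\lambda_y\circ\phi_F(y)\circ Tg$ (using $(\phi_F(y)^L)^R=\phi_F(y)$ and naturality of $\Psi$). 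Hence $k$ lies in the first displayed set if and only if $g$ lies in the second, so the bijection $k\leftrightarrow g$ restricts to the required isomorphism. As every identification used (``$\mathrm{Hom}$ out of a coequalizer is an equalizer'', $\rho(S)\dashv\pi(S)$, and $\Psi$) is natural, so is the resulting isomorphism, and therefore $\hat\phi_G\dashv\hat\phi_F$. I expect this to be the main obstacle: one must keep the $L/R$-transposes and the naturality squares of $\Psi$ straight in order to see that condition \eqref{2.3dcr} is exactly what forces the two transported parallel pairs to coincide.

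Finally, for \eqref{com211}, fix $d\in\mathscr D$. By the first paragraph, $\hat\phi_G(\rho(T)d)=\hat\phi_G(Td,\mu(T)d)$ is the coequalizer in $\mathscr C^S$ of the reflexive pair $p_1,p_2:\rho(S)(GT^2d)\rightrightarrows\rho(S)(GTd)$, with $p_1=(\mu(S)(GTd))\circ(S\phi_G(Td))$ and $p_2=SG\mu(T)d$. I would check that $q:=(\mu(S)Gd)\circ(S\phi_G(d)):\rho(S)(GTd)\to\rho(S)(Gd)$, together with $s:=SG\eta(T)d$ and $t:=SGT\eta(T)d$, is a split coequalizer with apex $\rho(S)(Gd)$; the four identities $qp_1=qp_2$, $qs=1$, $p_2t=1$, $p_1t=sq$ reduce to the two diagrams in \eqref{2.1dcr}, the unit laws of the monads $S$ and $T$, the naturality and associativity of $\mu(S)$, and the naturality of $\phi_G$. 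Since $\pi(S)$ is monadic it creates this $\pi(S)$-split coequalizer, so it is the coequalizer in $\mathscr C^S$ as well; hence $\hat\phi_G\rho(T)(d)\cong\rho(S)G(d)$, naturally in $d$, which is the commutativity of \eqref{com211}. (Alternatively, once $\hat\phi_G\dashv\hat\phi_F$ is established, \eqref{com211} is forced by taking left adjoints in the identity $\pi(T)\circ\hat\phi_F=F\circ\pi(S)$ of Lemma \ref{L2.1}, by uniqueness of adjoints.)
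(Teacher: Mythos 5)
Your proof is correct, and its core coincides with the paper's: both establish the adjunction by transposing along $\Psi$ and using condition \eqref{2.3dcr} to match the $T$-algebra-morphism condition on $g:x\longrightarrow Fy$ with the condition that $\lambda_y\circ S(g^L)$ coequalizes the pair $(\mu(S)Gx)\circ(S\phi_G(x))$ and $SG\lambda_x$ defining $\hat\phi_G(x,\lambda_x)$. The packaging differs usefully: the paper constructs the two mutually inverse assignments by hand (diagrams \eqref{213dw}--\eqref{2.8cdsx}) and leaves the checks that the induced maps are algebra morphisms and that the assignments are inverse as ``may be verified directly,'' whereas your formulation --- present $\hat\phi_G(x,\lambda_x)$ as a reflexive coequalizer of free $S$-algebras $\rho(S)(GTx)\rightrightarrows\rho(S)(Gx)$ and apply $\mathscr C^S(-,(y,\lambda_y))$ to get an equalizer, then transport through $\rho(S)\dashv\pi(S)$ and $\Psi$ --- absorbs those verifications and makes naturality nearly automatic. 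For the square \eqref{com211} the paper simply takes left adjoints in the identity $\pi(T)\circ\hat\phi_F=F\circ\pi(S)$ of Lemma \ref{L2.1} (the argument you relegate to your parenthetical remark), while your primary argument is a direct computation showing that $\rho(S)(GTd)\rightrightarrows\rho(S)(GT d)\to\rho(S)(Gd)$ (with $q=(\mu(S)Gd)\circ(S\phi_G(d))$, splittings $SG\eta(T)d$ and $SGT\eta(T)d$) is a $\pi(S)$-split fork, hence a coequalizer in $\mathscr C^S$; this is a bit longer but has the merit of identifying $\hat\phi_G\rho(T)$ explicitly and independently of the adjunction, and your stated identities ($qp_1=qp_2$, $qs=1$, $p_2t=1$, $p_1t=sq$) do check out from \eqref{2.1dcr}, the monad unit laws, and naturality of $\phi_G$ and $\mu(S)$.
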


\begin{proof}
We consider $(x,\lambda_x)\in \mathscr D^T$, $(y,\lambda_y)\in \mathscr C^S$ and a morphism $\beta:x\longrightarrow Fy$ which is a map of $T$-algebras. 
In particular, $\beta\in \mathscr D(x,Fy)$.  Accordingly, we have a morphism $\beta^L: Gx\longrightarrow y$ in $\mathscr C$. From the coequalizer in 
\eqref{coe210} we see that in order to construct a morphism $\alpha:\hat\phi_G(x)\longrightarrow y$ we must show  that
\begin{equation}\label{212ha}
\lambda_y\circ S\beta^L\circ (\mu(S)Gx)\circ (S\phi_G(x))=\lambda_y\circ S\beta^L\circ (SG\lambda_x)
\end{equation} Since $\beta: x\longrightarrow Fy$ is a morphism of $T$-algebras, the following diagrams commute
\begin{equation}\label{213dw}
\begin{CD}\small \xymatrix{
Tx \ar[r]^{T\beta} \ar[d]^{\lambda_x} & TFy\ar[r]^{\phi_F(y)} &FSy\ar[dl]^{F\lambda_y} \\
x \ar[r]^\beta &Fy & \\ 
}\end{CD} \quad \Rightarrow \quad \begin{CD}\small \xymatrix{
GTx \ar[rr]^{GT\beta} \ar[d]^{G\lambda_x} & &GTFy \ar[rrr]^{\phi_F(y)^L} &&& Sy\ar[dlll]^{\lambda_y} \\
Gx \ar[rr]^{\beta^L} & & y &&& \\ 
}\end{CD}
\end{equation} Here the right hand side diagram follows from the left hand side diagram using the adjointness of $(G,F)$. Applying the functor $S$ to the right hand side diagram in \eqref{213dw} and composing with $\lambda_y$, we obtain
\begin{equation}\label{214dqt}
\small \xymatrix{
SGTx \ar[rr]^{SGT\beta} \ar[d]^{SG\lambda_x} & & SGTFy\ar[rrrr]^{S(\phi_F(y)^L)} &&&&SSy\ar[d]_{S\lambda_y} \ar[r]^{\mu(S)y} &Sy\ar[d]^{\lambda_y}\\
SGx\ar[rrrrrr]^{S\beta^L} & & &&&&Sy \ar[r]^{\lambda_y} & y \\ 
}
\end{equation} Using the compatibility condition in \eqref{2.3dcr} in Definition \ref{D2.2}, we also have the following commutative diagram
\begin{equation}\label{2.8cdsx}
\begin{CD}
\small \xymatrix{
SGTx\ar[rrr]^{SGT\beta} \ar[d]_{S\phi_G(x)}& & & SGTFy\ar[d]^{S(\phi_F(y)^L)} & \\
SSGx \ar[rrr]^{SS\beta^L} \ar[d]_{\mu(S)Gx}&&& SSy\ar[d]^{\mu(S)y} &  \\
SGx\ar[rrr]^{S\beta^L} &&& Sy  \ar[r]^{\lambda_y} & y\\
}\end{CD}
\end{equation} Combining \eqref{214dqt} and \eqref{2.8cdsx}, the equality in \eqref{212ha} follows.  It may be verified directly that $\alpha:\hat\phi_G(x)\longrightarrow y$ is a morphism of $S$-algebras. Conversely, if we have an $S$-algebra morphism $\hat\phi_G(x)\longrightarrow y$, we have an induced map 
$Gx\xrightarrow{\eta(S)Gx}SGx\longrightarrow y$, which corresponds to a morphism $x\longrightarrow Fy$. Again, it may be verified directly that this is a $T$-algebra morphism and these two associations are inverses of each other.

\smallskip
Accordingly, we now have  $ \hat\phi_G: \mathscr D^T
\longrightarrow \mathscr C^S$ which is left adjoint to $\hat\phi_F:\mathscr C^S\longrightarrow \mathscr D^T$.  Then, $ \hat\phi_G\circ \rho(T)$ is left adjoint to $\pi(T)\circ \hat\phi_F$. By Lemma \ref{L2.1}, we have $\pi(T)\circ \hat\phi_F=F\circ \pi(S)$ and hence the diagram \eqref{com211} of left adjoints must commute.
\end{proof}

\section{The Main theorem}

We begin with the following definition.

\begin{defn}\label{D3.f5}
Let $\mathscr C$, $\mathscr D$ be categories and let $(\mathscr D,T)$ be a monad. By a $T$-module taking values in $\mathscr C$, we will mean a pair $(P,\nu(P))$ such that

\smallskip
(a) $P:\mathscr D\longrightarrow \mathscr C$ is a functor

\smallskip
(b) $\nu(P):PT\longrightarrow P$ is a natural transformation satisfying
\begin{equation}\label{3cd2v}
\begin{array}{lll}
\begin{CD}
PT^2 @>P\mu(T)>>PT \\
@V\nu(P)TVV @VV\nu(P)V \\
PT @>\nu(P)>> P\\
\end{CD} & \qquad \qquad
& \begin{CD}
\small \xymatrix{
P \ar[rr]^{P\eta(T)} \ar[drr]_{id} & & PT\ar[d]^{\nu(P)} \\
&& P \\
}
\end{CD}\\
\end{array}
\end{equation} A morphism $\alpha:(P,\nu(P))\longrightarrow (P',\nu(P'))$ of such $T$-modules  consists of a natural transformation $\alpha:P\longrightarrow P'$ of functors satisfying $\nu(P')\circ (\alpha T)=\alpha\circ \nu(P)$. We will denote by $Mod_T^{\mathscr C}$ the category of $T$-modules taking values in $\mathscr C$.
\end{defn}

\begin{thm}\label{P3.2f5}
Let $\mathscr C$, $\mathscr D$ be categories and let $(\mathscr D,T)$ be a monad. There is a canonical functor 
\begin{equation}\label{can3z}\Sigma_T^{\mathscr C}:Fun(\mathscr D,\mathscr C)\longrightarrow Mod_T^{\mathscr C} \qquad P\mapsto (PT,P\mu(T))
\end{equation} which is left adjoint to the forgetful functor $Mod_T^{\mathscr C} \longrightarrow Fun(\mathscr D,\mathscr C)$. 
\end{thm}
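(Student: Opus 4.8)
The slick route is to observe that $Mod_T^{\mathscr C}$ is itself an Eilenberg--Moore category. Since $T$ is a monad, precomposition with $T$ turns the endofunctor $\widetilde T := (-)\circ T$ of $Fun(\mathscr D,\mathscr C)$, $P\mapsto PT$, into a monad on $Fun(\mathscr D,\mathscr C)$: the multiplication $(\widetilde T)^2\Rightarrow\widetilde T$ and unit $\mathrm{Id}\Rightarrow\widetilde T$ are, at $P$, the transformations $P\mu(T):PTT\to PT$ and $P\eta(T):P\to PT$, and the monad axioms for $\widetilde T$ are obtained by whiskering those of $T$ on the left by $P$. A $\widetilde T$-algebra is precisely a $T$-module with values in $\mathscr C$ in the sense of Definition~\ref{D3.f5}, the two diagrams in \eqref{3cd2v} being exactly the algebra axioms; thus $Mod_T^{\mathscr C}=Fun(\mathscr D,\mathscr C)^{\widetilde T}$, the functor $\Sigma_T^{\mathscr C}$ is the free-algebra functor $\rho(\widetilde T)$, and the claim reduces to the standard adjunction between a free functor and its forgetful functor.

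If one prefers to argue directly, the plan is as follows. First check that $(PT,P\mu(T))$ lies in $Mod_T^{\mathscr C}$: the left square of \eqref{3cd2v} is $P$ applied to the associativity identity $\mu(T)\circ T\mu(T)=\mu(T)\circ\mu(T)T$, and the triangle is $P$ applied to $\mu(T)\circ\eta(T)T=\mathrm{id}_T$. Functoriality of $\Sigma_T^{\mathscr C}$ is immediate, since a natural transformation $\theta:P\to P'$ whiskers to $\theta T:PT\to P'T$, which is a morphism of $T$-modules by the interchange law $P'\mu(T)\circ(\theta T)T=\theta T\circ P\mu(T)$. Next, build the adjunction from its unit and a universal property: writing $U:Mod_T^{\mathscr C}\to Fun(\mathscr D,\mathscr C)$ for the forgetful functor, take the unit at $P$ to be $\eta_P:=P\eta(T):P\to PT=U\Sigma_T^{\mathscr C}(P)$, and for a $T$-module $(M,\nu(M))$ and a natural transformation $f:P\to U(M)$ set $\bar f:=\nu(M)\circ(fT):PT\to MT\to M$.

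The remaining verifications are routine and purely formal. That $\bar f$ is a morphism of $T$-modules follows by combining the associativity axiom $\nu(M)\circ\nu(M)T=\nu(M)\circ M\mu(T)$ for $M$ with the interchange law $M\mu(T)\circ fTT=fT\circ P\mu(T)$. The identity $U(\bar f)\circ\eta_P=f$ uses the interchange law $fT\circ P\eta(T)=M\eta(T)\circ f$ together with the module unit axiom $\nu(M)\circ M\eta(T)=\mathrm{id}_M$. Uniqueness holds because any $T$-module morphism $g:(PT,P\mu(T))\to(M,\nu(M))$ satisfies, using $P\mu(T)\circ P(\eta(T)T)=\mathrm{id}_{PT}$ and then the module-morphism condition, $g=\nu(M)\circ gT\circ P(\eta(T)T)=\nu(M)\circ(g\circ P\eta(T))T$, so $g$ is completely determined by $U(g)\circ\eta_P$. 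Finally one checks that the resulting bijection $Fun(\mathscr D,\mathscr C)(P,U(M))\cong Mod_T^{\mathscr C}(\Sigma_T^{\mathscr C}(P),M)$ is natural in $P$ and $M$. There is no genuine obstacle here; the whole construction is just the free--forgetful adjunction for the monad $\widetilde T=(-)\circ T$, and the only thing demanding a little care is keeping track of left versus right whiskering in the interchange laws.
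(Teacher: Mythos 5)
Your proposal is correct, and it actually contains two arguments: your ``direct'' fallback is essentially the paper's own proof, while your slick route is a genuinely different packaging. The paper verifies that $(PT,P\mu(T))$ satisfies \eqref{3cd2v} and then exhibits the hom-set bijection directly, sending $\alpha:P\to P'$ to $\nu(P')\circ(\alpha T)$ and $\beta:(PT,P\mu(T))\to(P',\nu(P'))$ to $\beta\circ P\eta(T)$, observing that the two assignments are mutually inverse; your unit $P\eta(T)$ and transpose $\bar f=\nu(M)\circ(fT)$ are exactly these maps, with the naturality, well-definedness and uniqueness checks written out more explicitly than in the paper. Your first route instead observes that precomposition $\widetilde T=(-)\circ T$ is a monad on $Fun(\mathscr D,\mathscr C)$ whose algebras are precisely the objects of $Mod_T^{\mathscr C}$ (and whose algebra morphisms are precisely the module morphisms of Definition \ref{D3.f5}), so that $\Sigma_T^{\mathscr C}$ is the free-algebra functor and the statement is an instance of the standard free--forgetful adjunction. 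This reframing is sound --- the monad axioms for $\widetilde T$ and the naturality of its multiplication and unit in $P$ are exactly the whiskering/interchange identities you cite --- and it buys something the paper's computation does not: it identifies $Mod_T^{\mathscr C}$ as an Eilenberg--Moore category, which both explains where the explicit formulas come from and makes general monad-theoretic facts about $Mod_T^{\mathscr C}$ (e.g.\ creation of limits and of those colimits preserved by $\widetilde T$) available for free; the paper's direct bijection, on the other hand, is shorter and avoids introducing a monad on the (possibly large) functor category $Fun(\mathscr D,\mathscr C)$, a size point the paper itself also glosses over.
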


\begin{proof}
Since $(\mathscr D,T)$ is a monad, it is easy to verify that for any $P\in Fun(\mathscr D,\mathscr C)$, the pair $ (PT,P\mu(T))$ satisfies the conditions in \eqref{3cd2v} for being a $T$-module with values in $\mathscr C$. It remains to show that for any $(P',\nu(P'))\in  Mod_T^{\mathscr C} $, we have isomorphisms
\begin{equation}
\label{33eq}
 Mod_T^{\mathscr C}( (PT,P\mu(T)),(P',\nu(P')))\cong Fun(\mathscr D,\mathscr C)(P,P')
\end{equation} Indeed, given $\alpha:P\longrightarrow P'$ in $Fun(\mathscr D,\mathscr C)$, we have $\alpha^L: (PT,P\mu(T))\longrightarrow (P',\nu(P'))$ defined by setting
\begin{equation}
\begin{CD}
\alpha^L:PT @>\alpha T>> P'T @>\nu(P')>> P'\\
\end{CD} 
\end{equation} Conversely, for $\beta: (PT,P\mu(T))\longrightarrow (P',\nu(P'))$ in $Mod_T^{\mathscr C}$, we have the transformation $\beta^R:P\longrightarrow P'$ given by
\begin{equation}
\begin{CD}
\beta^R: P @>P\eta(T)>> PT @>\beta>> P'
\end{CD}
\end{equation} Because $(\mathscr D,T)$ is a monad, it is clear that these two associations are inverse to each other.
\end{proof}

Following Proposition \ref{P3.2f5}, we will say that $ (PT,P\mu(T))$ is the free $T$-module in $\mathscr C$ associated to the functor $P:\mathscr D\longrightarrow \mathscr C$. 

\begin{lem}\label{L3.3cj}
Let $\Psi=(G,F)$ be an adjunction and let   $\phi=(\phi_G,\phi_F):(\mathscr C,S)\longrightarrow (\mathscr D,T)$ be a $\Psi$-morphism of monads. Then, $SG:\mathscr D\longrightarrow \mathscr C$ is canonically equipped with the structure of a $T$-module taking values in $\mathscr C$.
\end{lem}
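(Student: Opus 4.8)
The plan is to exhibit an explicit natural transformation $\nu(SG):SGT\longrightarrow SG$ and check the two diagrams of \eqref{3cd2v}. The obvious candidate, already appearing in Lemmas \ref{L2.3} and \ref{L2.4}, is
\[
\nu(SG):=(\mu(S)G)\circ(S\phi_G):SGT\longrightarrow SSG\longrightarrow SG,
\]
i.e. first apply $\phi_G:GT\longrightarrow SG$ (whiskered by $S$ on the left), then multiply using $\mu(S)$. First I would verify the unit condition: precomposing with $SG\eta(T)$ gives $(\mu(S)G)\circ(S\phi_G)\circ(SG\eta(T))$, and this is exactly the first composition shown to be the identity in Lemma \ref{L2.3} (its proof uses the unit triangle in \eqref{2.1dcr} together with the left-unit law $(\mu(S)G)\circ(S\eta(S)G)=id$). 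So the unit axiom is free.

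Next I would check the associativity square, that $\nu(SG)\circ(\nu(SG)T)=\nu(SG)\circ(SG\mu(T))$ as natural transformations $SGT^2\longrightarrow SG$. Both sides are built from the three ingredients $\phi_G$, $\mu(S)$, $\mu(T)$, and the identity should follow by pasting: apply $S$ to the associativity square of the monad morphism $\phi_G$ (the left-hand square in \eqref{2.1dcr}), namely $(\mu(S)G)\circ(S\phi_G)\circ(SGT... )$ versus $\phi_G\circ(G\mu(T))$ appropriately whiskered, and then glue on one instance of the associativity law $\mu(S)\circ(S\mu(S))=\mu(S)\circ(\mu(S)S)$ for the monad $S$. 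Concretely, starting from $SGT^2$: going one way we first use $S\phi_G T$ to reach $SSGT$, then $SS\phi_G$ to reach $SSSG$, then collapse the $S$'s via $\mu(S)$; going the other way we use $SG\mu(T)$ then $S\phi_G$ then $\mu(S)G$. The whiskered \eqref{2.1dcr}-square rewrites the $\phi_G$-part and the two remaining $\mu(S)$-collapses are reconciled by associativity of $S$. This is a routine but slightly fiddly diagram chase; it is the only real content of the lemma.

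Finally I would remark that naturality of $\nu(SG)$ in the variable of $\mathscr D$ is automatic since it is a composite of (whiskerings of) the natural transformations $\phi_G$ and $\mu(S)$, and that functoriality of the assignment on morphisms of $T$ is likewise immediate. I expect the main obstacle to be purely bookkeeping — keeping the whiskerings straight in the associativity pentagon-like square — rather than anything conceptual; no hypothesis beyond condition (1) of Definition \ref{D2.2} and the monad axioms for $S$ is needed, so in particular condition (3) and the adjunction $\Psi$ play no role in this particular lemma (they will enter later when comparing with $F$-side data).
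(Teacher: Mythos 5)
Your proposal is correct and follows essentially the same route as the paper: the same map $\nu'=(\mu(S)G)\circ(S\phi_G)$, the unit axiom from the $S$-whiskered unit triangle of \eqref{2.1dcr} together with $(\mu(S)G)\circ(S\eta(S)G)=id$ (as in Lemma \ref{L2.3}), and the associativity square from the $S$-whiskered square of \eqref{2.1dcr} pasted with (naturality and) associativity of $\mu(S)$, exactly the content of the paper's diagrams \eqref{36cdu} and \eqref{37deq}. Your closing observation that only condition (1) of Definition \ref{D2.2} and the monad axioms for $S$ are needed is also accurate.
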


\begin{proof}
We set $\nu': SGT\xrightarrow{S\phi_G}  S^2G\xrightarrow{\mu(S)G} SG $. In order to show that this determines a $T$-module, we note  that the following diagram commutes. 
\begin{equation}\label{36cdu}\small\begin{CD}
\small \xymatrix{
SGT^2 \ar[dd]_{SG\mu(T)} \ar[rr]^{S\phi_GT} & & SSGT \ar[d]_{SS\phi_G} \ar[rr]^{\mu(S)GT}& & SGT \ar[d]^{S\phi_G}\\
& & SSSG \ar[rr]^{\mu(S)SG}\ar[d]_{S\mu(S)G}&& SSG\ar[d]_{\mu(S)G} \\
SGT \ar[rr]_{S\phi_G}&&S^2G \ar[rr]_{\mu(S)G}&& SG \\
}\end{CD}
\end{equation} In \eqref{36cdu}, the left hand square is obtained by applying $S$ to the commutative square in \eqref{2.1dcr}. Using the triangle in \eqref{2.1dcr}, we also have the commutative diagram
\begin{equation}\label{37deq}\small
\begin{CD}
\small \xymatrix{  & & SG\ar[dll]_{SG\eta(T)} \ar[d]_{S\eta(S)G} \ar[drr]^1 & \\
SGT\ar[rr]^{S\phi_G}&& SSG \ar[rr]^{\mu(S)G} && SG\\
}
\end{CD}
\end{equation} This proves the result.
\end{proof}

\begin{defn}\label{D3.4}
Let $\Psi=(G,F)$ be an adjunction and let   $\phi=(\phi_G,\phi_F):(\mathscr C,S)\longrightarrow (\mathscr D,T)$ be a $\Psi$-morphism of monads. We will say that $\phi$ satisfies the PBW-property if there exists a functor $Q:\mathscr D\longrightarrow \mathscr C$ which fits into the following commutative diagram
\begin{equation}\label{pbwdiag}
\begin{CD}
\small \xymatrix{
\mathscr D^T  \ar[r]^{\hat\phi_G}\ar[d]_{\pi(T)}& \mathscr C^S  \ar[d]_{\pi(S)} \\
\mathscr D\ar[r]^{ Q}& \mathscr C\\
}\end{CD} 
\end{equation}
\end{defn}

\begin{thm}\label{P3.6b} 
Let $\Psi=(G,F)$ be an adjunction and let   $\phi=(\phi_G,\phi_F):(\mathscr C,S)\longrightarrow (\mathscr D,T)$ be a $\Psi$-morphism of monads. Then, if $\phi=(\phi_G,\phi_F)$ has the PBW property, $\nu': SGT\xrightarrow{S\phi_G}  S^2G\xrightarrow{\mu(S)G} SG $ gives $SG$ the structure of a free $T$-module in $\mathscr C$.
\end{thm}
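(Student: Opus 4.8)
The plan is to feed the PBW property into Definition \ref{D3.4} to obtain a functor $Q\colon\mathscr D\longrightarrow\mathscr C$ with $\pi(S)\circ\hat\phi_G=Q\circ\pi(T)$, and then to show that $(SG,\nu')$ is isomorphic in $Mod_T^{\mathscr C}$ to the free $T$-module $(QT,Q\mu(T))$ associated to $Q$ in the sense of Proposition \ref{P3.2f5}; this is exactly the assertion of the theorem. The first move is to restrict everything to \emph{free} $T$-algebras. Evaluating $\pi(S)\hat\phi_G=Q\pi(T)$ on $\rho(T)(d)=(Td,\mu(T)d)$ and using $\pi(T)\rho(T)=T$ gives $\pi(S)\hat\phi_G\rho(T)(d)=Q(Td)=(QT)(d)$, so on underlying objects $\pi(S)\hat\phi_G\rho(T)=QT$. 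On the other hand, Lemma \ref{L2.4} identifies $\hat\phi_G\rho(T)(d)$ with the coequalizer, formed in $\mathscr C^S$, of the reflexive pair
\[
SGT^2d\ \doublerightarrow{\nu'_{Td}}{SG(\mu(T)d)}\ SGTd,
\]
where $\nu'_{Td}=(\mu(S)G(Td))\circ(S\phi_G(Td))$; write $q_d\colon SGTd\to QTd$ for the underlying coequalizer map. Since $\hat\phi_G$ sends a morphism $f$ of $T$-algebras to the map induced on coequalizers by $SG(\pi(T)f)$, the coequalizer maps assemble into a natural transformation, and restricting along $\rho(T)$ makes $d\mapsto q_d$ a natural transformation $SGT\Rightarrow QT$.

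Next I would invoke the standard split-coequalizer presentation of a module over a monad: for any $(M,\nu(M))\in Mod_T^{\mathscr C}$ the diagram $MT^2\doublerightarrow{M\mu(T)}{\nu(M)T}MT\xrightarrow{\ \nu(M)\ }M$ is a split coequalizer, with sections $M\eta(T)$ and $M(T\eta(T))$; this is a direct check from the monad unit law and the module axioms \eqref{3cd2v}. Applying it to the $T$-module $(SG,\nu')$ of Lemma \ref{L3.3cj} and evaluating at $d$ shows that $\nu'_d\colon SGTd\to SGd$ is a coequalizer of the \emph{same} parallel pair $SGT^2d\rightrightarrows SGTd$ occurring in Lemma \ref{L2.4} at $\rho(T)(d)$; moreover this split coequalizer already lifts to $\mathscr C^S$ — the two arrows, the sections, and $\nu'_d$ are all morphisms of free $S$-algebras, using associativity of $\mu(S)$ — so passing to underlying objects via $\pi(S)$ preserves it. By uniqueness of coequalizers there is a unique isomorphism $\theta_d\colon QTd\xrightarrow{\ \sim\ }SGd$ with $\theta_d\circ q_d=\nu'_d$, and naturality of $q$ and of $\nu'$ forces $\theta=(\theta_d)\colon QT\Rightarrow SG$ to be a natural isomorphism. (With the evident choice of coequalizer in Lemma \ref{L2.4}, namely this split one, $\theta$ is the identity and one even gets $\nu'=Q\mu(T)$ on the nose.)

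Finally I would promote $\theta$ to an isomorphism of $T$-modules $(QT,Q\mu(T))\xrightarrow{\ \sim\ }(SG,\nu')$; this is where the PBW hypothesis re-enters. For each $d$, apply $\hat\phi_G$ to the $T$-algebra morphism $\mu(T)d\colon\rho(T)(Td)\to\rho(T)(d)$ (the structure map of the free $T$-algebra on $d$, viewed as an algebra morphism). On underlying objects this is $Q(\mu(T)d)$ by the PBW equation, and by the description of $\hat\phi_G$ on morphisms it satisfies $Q(\mu(T)d)\circ q_{Td}=q_d\circ SG(\mu(T)d)$, where $q_{Td}\colon SGT^2d\to QT^2d$ is the component of $q$ at $Td$. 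Composing with $\theta_d$ on the left, substituting $\theta_d\circ q_d=\nu'_d$ and $\theta_{Td}\circ q_{Td}=\nu'_{Td}$, and invoking the module-associativity identity $\nu'_d\circ SG(\mu(T)d)=\nu'_d\circ\nu'_{Td}$ from \eqref{3cd2v}, one arrives at $\theta_d\circ Q(\mu(T)d)\circ q_{Td}=\nu'_d\circ\theta_{Td}\circ q_{Td}$; cancelling the epimorphism $q_{Td}$ yields $\theta\circ Q\mu(T)=\nu'\circ(\theta T)$, which is precisely the condition in Definition \ref{D3.f5} that $\theta$ be a morphism in $Mod_T^{\mathscr C}$. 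Being also a natural isomorphism, $\theta$ is an isomorphism in $Mod_T^{\mathscr C}$, so $(SG,\nu')\cong(QT,Q\mu(T))$ is a free $T$-module in $\mathscr C$.

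I expect the main obstacle to be the bookkeeping in the middle step: checking that the parallel pair used to build $\hat\phi_G\rho(T)(d)$ in Lemma \ref{L2.4} genuinely coincides with the canonical split-coequalizer pair of the module $(SG,\nu')$ at $d$, that all the arrows and sections in sight are morphisms of free $S$-algebras so that nothing is lost on passing through $\pi(S)$, and that the resulting comparison isomorphism $\theta$ is at once natural in $d$ and compatible with the two module structures — i.e. keeping the whiskerings and the reindexing $d\rightsquigarrow Td$ straight throughout.
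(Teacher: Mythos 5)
Your proof is correct, and it reaches the paper's conclusion by a somewhat different route than the paper's own argument. The paper gets the identification of underlying functors $QT=SG$ immediately from the commutative square of left adjoints in \eqref{com211} (see \eqref{39com}), and then checks compatibility of the two right $T$-actions by pasting the explicit diagrams \eqref{310vq}--\eqref{312fn} built from the coequalizer presentations of $\hat\phi_G(Ty,\lambda_{Ty})$ and $\hat\phi_G(TTy,\lambda_{TTy})$. You instead never invoke Theorem \ref{P2.5}: you observe that the parallel pair in \eqref{coe210} at a free algebra $\rho(T)(d)$ is exactly the canonical pair $\bigl(SG\mu(T)d,\ \nu'_{Td}\bigr)$, that its split coequalizer $\nu'_d$ (with sections $SG\eta(T)d$ and $SGT\eta(T)d$) lifts to $\mathscr C^S$ because all five maps are morphisms of free $S$-algebras, and then uniqueness of coequalizers gives a natural isomorphism $\theta\colon QT\Rightarrow SG$; the module-map condition $\nu'\circ(\theta T)=\theta\circ Q\mu(T)$ then drops out by cancelling the (split) epimorphism $q_{Td}$, using the PBW equality $\pi(S)\hat\phi_G(\mu(T)d)=Q\mu(T)d$. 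In effect you import into this direction the split-fork technique the paper reserves for the converse, Proposition \ref{P3.7b} (compare \eqref{sfork315}). What your version buys: it is self-contained (no appeal to the adjoint square), it replaces the large diagrams \eqref{311fn}--\eqref{312fn} with a one-line epi-cancellation, and it honestly produces an isomorphism $(SG,\nu')\cong(QT,Q\mu(T))$ in $Mod_T^{\mathscr C}$, which is exactly what condition (2) of Theorem \ref{PB3W} requires (the paper's assertion of the equality $QT=SG$ is only literally true after the identifications furnished by its chosen coequalizers, so the isomorphism-level statement is the cleaner one). What the paper's route buys is that the object-level identification is immediate from \eqref{39com} and no split-coequalizer lemma for modules needs to be set up. Two small points you should make explicit if you write this up: state that $q_{Td}$ is epi in $\mathscr C$ because $q_{Td}=\theta_{Td}^{-1}\circ\nu'_{Td}$ and $\nu'_{Td}$ is split by $SG\eta(T)Td$ (the underlying functor $\pi(S)$ does not preserve arbitrary coequalizer projections), and verify the fork identities for the sections (in particular $(\nu' T)\circ SGT\eta(T)=SG\eta(T)\circ\nu'$, which is just naturality of $\nu'$), since these are the only places where something could silently go wrong.
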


\begin{proof}
Since $\phi$ has the PBW property, we have $Q:\mathscr D\longrightarrow \mathscr C$ which fits into the commutative diagram \eqref{pbwdiag}. We first claim that $SG=QT$. Using \eqref{com211} and \eqref{pbwdiag}, we have the following commutative diagram.
\begin{equation}\label{39com}
\begin{CD}
\mathscr D @>\rho(T)>> \mathscr D^T @>\pi(T)>> \mathscr D \\
@VGVV @V\hat\phi_GVV @VVQV \\
\mathscr C @>\rho(S)>> \mathscr C^S @>\pi(S)>> \mathscr C \\
\end{CD}
\end{equation} From \eqref{39com}, it is clear that for any $x\in \mathscr D$, we have $QTx=SGx\in \mathscr C$. It remains to show that $QT=SG$ as objects of $Mod_T^{\mathscr C}$.  For this, we note the following commutative diagram
\begin{equation}\label{310vq}\small
\begin{CD}
\small \xymatrix@C+2pc{
&SGTy\ar[rrd]^{S\phi_Gy}&&\\
SSGTy \ar[ur]^{\mu(S)GTy}\ar[rr]_{SS\phi_Gy}&&SSSGy\ar[d]_{S\mu(S)Gy}\ar[r]_{\mu(S)SGy}&SSGy\ar[d]^{\mu(S)Gy}\\
SGTTy \ar[r]_{SG\mu(T)y} \ar[u]^{S\phi_GTy}& SGTy\ar[r]_{S\phi_G(Ty)} & SSGy\ar[r]_{\mu(S)Gy} & SGy \\
}
\end{CD}
\end{equation}
which gives for any $y\in \mathscr D$ the morphism from the coequalizer $QTy=Q\pi(T)(Ty,\lambda_{Ty})=\pi(S)\hat\phi_G(Ty,\lambda_{Ty})\longrightarrow SGy$. The condition \eqref{2.1dcr} gives us the commutative diagram 
\begin{equation}\label{311fn}\small
\begin{CD}
\small \xymatrix@C+2pc{
SGTTTy\ar[r]_{SG\mu(T)Ty}\ar[dd]_{SGT\mu(T)y}&SGTTy\ar[r]_{S\phi_GTy}\ar[dd]_{SG\mu(T)y}&SSGTy\ar[d]_{SS\phi_Gy}\ar[r]_{\mu(S)GTy}&SGTy\ar[d]^{S\phi_Gy}\\
&&SSSGy\ar[d]_{S\mu(S)Gy}\ar[r]_{\mu(S)SGy}&SSGy\ar[d]^{\mu(S)Gy}\\
SGTTy \ar[r]_{SG\mu(T)y} & SGTy \ar[r]_{S\phi_Gy} & SSGy \ar[r]_{\mu(S)Gy} & SGy\\
}
\end{CD}
\end{equation} and also
\begin{equation}\label{312fn}\small
\begin{CD}
\small \xymatrix@C+2pc{
SGTTTy\ar[r]_{S\phi_GTTy}\ar[dd]_{SGT\mu(T)y}&SSGTTy\ar[r]_{\mu(S)GTTy}\ar[dd]_{SSG\mu(T)y}&SGTTy\ar[r]_{S\phi_GTy}\ar[dd]_{SG\mu(T)y}&SSGTy\ar[d]_{SS\phi_Gy}\ar[r]_{\mu(S)GTy}&SGTy\ar[d]^{S\phi_Gy}\\
&&&SSSGy\ar[d]_{S\mu(S)Gy}\ar[r]_{\mu(S)SGy}&SSGy\ar[d]^{\mu(S)Gy}\\
SGTTy\ar[r]_{S\phi_GTy}&SSGTy \ar[r]_{\mu(S)GTy} & SGTy \ar[r]_{S\phi_Gy} & SSGy \ar[r]_{\mu(S)Gy} & SGy\\
}
\end{CD}
\end{equation} Putting together \eqref{311fn} and \eqref{312fn} and considering the coequalizers defining $\hat\phi_G(Ty,\lambda_{Ty})$ and $\hat\phi_G(TTy,
\lambda_{TTy})$, we see that  $QT=SG$ is compatible with the right action of $T$ on both sides. This proves the result.
\end{proof}

\begin{thm}\label{P3.7b} 
Let $\Psi=(G,F)$ be an adjunction and let   $\phi=(\phi_G,\phi_F):(\mathscr C,S)\longrightarrow (\mathscr D,T)$ be a $\Psi$-morphism of monads. Then,  $SG\in Mod_T^{\mathscr C}$ is isomorphic  to a free $T$-module taking values in $\mathscr C$. 
\end{thm}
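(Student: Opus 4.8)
The plan is to realise $(SG,\nu')$, with $\nu'=(\mu(S)G)\circ(S\phi_G)$ the module structure of Lemma \ref{L3.3cj}, as the free $T$-module $\Sigma_T^{\mathscr C}(Q)=(QT,Q\mu(T))$ for a suitable functor $Q\colon\mathscr D\to\mathscr C$; by Proposition \ref{P3.2f5} this is exactly what it means for $SG$ to be isomorphic to a free $T$-module in $\mathscr C$. The candidate for $Q$ is essentially forced on us: composing the commutative square \eqref{com211} with $\pi(S)$ gives $\pi(S)\circ\hat\phi_G\circ\rho(T)=SG$, so $\pi(S)\hat\phi_G(Ty,\mu(T)y)=SGy$ for every $y\in\mathscr D$, which suggests setting $Qy:=\pi(S)\hat\phi_G(y,\lambda_y)$, i.e. the reflexive coequalizer \eqref{coe210}. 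For this to define a functor one needs the object $\pi(S)\hat\phi_G(y,\lambda_y)$ to depend only on $y$ and not on the chosen $T$-algebra structure $\lambda_y$ — but this is exactly the PBW property of $\phi$ (Definition \ref{D3.4}). Granting it, we obtain $Q\colon\mathscr D\to\mathscr C$ with $Q\pi(T)=\pi(S)\hat\phi_G$, whence $QTy=Q\pi(T)(Ty,\mu(T)y)=\pi(S)\hat\phi_G\rho(T)(y)=SGy$, i.e. $QT=SG$ as functors $\mathscr D\to\mathscr C$.

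It then remains to promote the equality $QT=SG$ of functors to an isomorphism in $Mod_T^{\mathscr C}$, i.e. to check that under the identifications $QTT=SGT$ and $QT=SG$ the free action $Q\mu(T)$ is carried to $\nu'$. This is precisely the computation already done inside the proof of Proposition \ref{P3.6b}: working with the coequalizer presentations of $\hat\phi_G(Ty,\mu(T)y)$ and of $\hat\phi_G(TTy,\mu(T)Ty)$, and with the diagrams \eqref{311fn} and \eqref{312fn} that follow from the monad-morphism condition \eqref{2.1dcr}, one identifies $(Q\mu(T))_y\colon SGTy\to SGy$ with $\nu'(y)=(\mu(S)Gy)\circ(S\phi_G(y))$. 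Hence $(SG,\nu')=(QT,Q\mu(T))=\Sigma_T^{\mathscr C}(Q)$ is a free $T$-module in $\mathscr C$.

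The main obstacle, and the only genuinely non-routine point, is the first step: constructing $Q$. The underlying object $\pi(S)\hat\phi_G(y,\lambda_y)$ really can depend on $\lambda_y$ and not only on $y$ — already for the morphism of monads induced by the forgetful functor from groups to monoids, the group monad is not a free right module over the monoid monad — so $Q$ cannot be produced without the PBW property. I therefore expect this proposition to be understood together with that property: it then sharpens Proposition \ref{P3.6b}, which exhibits $(SG,\nu')$ as the particular free module $(QT,Q\mu(T))$, to the assertion that $SG$ lies in the isomorphism class of a free object of $Mod_T^{\mathscr C}$. For the converse direction — recovering the PBW property from an isomorphism $SG\cong PT$ in $Mod_T^{\mathscr C}$ — the plan would instead be to transport the coequalizer \eqref{coe210} across that isomorphism: since $(y,\lambda_y)$ is the reflexive coequalizer of $(TTy,\mu(T)Ty)\rightrightarrows(Ty,\mu(T)y)$ in $\mathscr D^T$ and $\hat\phi_G$, being a left adjoint, preserves it, the presentation of $\pi(S)\hat\phi_G(y,\lambda_y)$ collapses to $Py$, manifestly independent of $\lambda_y$.
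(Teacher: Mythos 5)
Your main argument addresses the wrong implication. Despite its terse statement, Proposition \ref{P3.7b} functions in Theorem \ref{PB3W} as the implication (2) $\Rightarrow$ (1): the paper's proof \emph{assumes} a functor $Q$ with $SG\cong QT$ in $Mod_T^{\mathscr C}$ and deduces the PBW property, i.e.\ $Q\pi(T)=\pi(S)\hat\phi_G$, by identifying the coequalizer \eqref{coe210} with $Q(x)$ through the split coequalizer diagram \eqref{sfork315}. Your proposal instead argues from the PBW property to freeness of $SG$, which — as you yourself note — is exactly the computation of Proposition \ref{P3.6b}; as a proof of the present proposition it therefore establishes nothing beyond what the previous one already gives. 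You were right to sense that the statement cannot hold unconditionally (your groups-over-monoids example is precisely the standard non-PBW situation), but the intended reading, forced by the proof and by the role the proposition plays in Theorem \ref{PB3W}, is ``if $SG$ with the action $\nu'$ is isomorphic to a free $T$-module, then $\phi$ has the PBW property,'' not ``PBW implies free.''

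Your closing sketch of that converse is the right idea and is close to the paper's argument: the paper transports \eqref{coe210} along $SG\cong QT$ to the pair $(Q\mu(T)x,\,QT\lambda_x):QTTx\rightrightarrows QTx$ and checks directly that it is split, with sections $Q\eta(T)x$ and $Q\eta(T)Tx$, so its coequalizer is $Qx$; your route via preservation of the canonical presentation of $(y,\lambda_y)$ by the left adjoint $\hat\phi_G$ amounts to the same computation. To turn the sketch into a proof you still need two points. First, the coequalizer defining $\hat\phi_G(y,\lambda_y)$ is formed in $\mathscr C^S$, and $\pi(S)$ does not preserve arbitrary reflexive coequalizers; the phrase ``collapses to $Py$'' is justified only because the transported pair has a \emph{split} coequalizer in $\mathscr C$, and split coequalizers are absolute and are created by $\pi(S)$ — this is exactly what the verification in \eqref{sfork315} supplies. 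Second, identifying the two parallel pairs uses both that the isomorphism $SG\cong QT$ is a morphism in $Mod_T^{\mathscr C}$ (so it intertwines $\nu'$ with $Q\mu(T)$) and that it is a natural transformation (so, applied at $\lambda_y$, it intertwines $SG\lambda_y$ with $QT\lambda_y$); without spelling this out the transported coequalizer need not be the one in \eqref{coe210}. Naturality in $(x,\lambda_x)$ of the resulting isomorphism $\pi(S)\hat\phi_G(x,\lambda_x)\cong Qx$, required by Theorem \ref{PB3W}(1), should also be recorded, though it follows from the naturality of the split fork.
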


\begin{proof}
We take $Q:\mathscr D\longrightarrow \mathscr C$ so that $SG\cong QT$ in $Mod_T^{\mathscr C}$. We claim that $Q\pi(T)=\pi(S)\hat\phi_G$. Using the definition of $\hat\phi_G$ in \eqref{coe210}, this means that we must show that the coequalizer
\begin{equation}\label{coe313}
Coeq\left(SGTx\doublerightarrow{(\mu(S)Gx)\circ (S\phi_G(x))}{\qquad SG\lambda_x\qquad} SGx\right)\cong Q(x)
\end{equation}
in $\mathscr C$ for any $T$-algebra $(x,\lambda_x)\in \mathscr D^T$. Since $SG\cong QT$ as $T$-modules, it follows that the composition
$SGT\xrightarrow{S\phi_G}SSG\xrightarrow{\mu(S)G}SG$ giving the $T$-module structure on $SG$ corresponds to the composition
$QTT\xrightarrow{Q\mu(T)}QT$ giving the $T$-module structure on $QT$. Accordingly, we can prove \eqref{coe313} by showing 
that
\begin{equation}\label{coe314}
Coeq\left(QTTx\doublerightarrow{Q\mu(T)x}{\qquad QT\lambda_x\qquad}QTx\right)\cong Q(x)
\end{equation} We will prove this by showing that the following is a split coequalizer diagram in $\mathscr C$:
\begin{equation}\label{sfork315}
QTx\xrightarrow{\epsilon=Q\eta(T)Tx}QTTx \doublerightarrow{\gamma=Q\mu(T)x}{\qquad \delta=QT\lambda_x\qquad}QTx
\xleftrightarrows[\text{$\qquad \alpha=Q\lambda_x\qquad$}]{\text{$\qquad \beta=Q\eta(T)x\qquad$}}  Qx
\end{equation} Since $(x,\lambda_x)$ is a $T$-algebra, we see that $\alpha\gamma=\alpha\delta$. Since $T$ is a monad, we get $\alpha\beta=id$ and $\gamma\epsilon=id$. We also observe directly that $\delta\epsilon = \beta\alpha$. We conclude that $Qx=Coeq(\gamma,\delta)$, which proves the result.
\end{proof}

\begin{Thm}\label{PB3W}
Let $\Psi=(G,F)$ be an adjunction and let   $\phi=(\phi_G,\phi_F):(\mathscr C,S)\longrightarrow (\mathscr D,T)$ be a $\Psi$-morphism of monads. Then, the following are equivalent.

\smallskip
(1) There exists a functor $Q:\mathscr D\longrightarrow \mathscr C$ such that there are isomorphisms in $\mathscr C$
\begin{equation}\label{316wbp}
\hat\phi_G(x,\lambda_x)=Q(x) \qquad \forall\textrm{ }(x,\lambda_x)\in \mathscr D^T
\end{equation} natural with respect to morphisms in $\mathscr D^T$.

\smallskip
(2)  The functor $SG$, along with the map $\nu': SGT\xrightarrow{S\phi_G}  S^2G\xrightarrow{\mu(S)G} SG $, is isomorphic to a free $T$-module with values in $\mathscr C$.
\end{Thm}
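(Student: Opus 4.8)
The plan is to prove Theorem \ref{PB3W} by stringing together the results already established. The statement is an ``if and only if,'' so I would prove the two implications separately, and in fact most of the work is already done.

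\smallskip
\textbf{Proof that (1) $\Rightarrow$ (2).} Observe first that condition (1) is essentially a restatement of the PBW-property from Definition \ref{D3.4}: given a functor $Q:\mathscr D\longrightarrow\mathscr C$ with natural isomorphisms $\hat\phi_G(x,\lambda_x)\cong Q(x)$, the forgetful functor $\pi(S)$ sends the $S$-algebra $\hat\phi_G(x,\lambda_x)$ to its underlying object, and naturality in $\mathscr D^T$ combined with the commutativity requirement on $\pi(T)$ shows that $Q$ fits into the square \eqref{pbwdiag}. Strictly, Definition \ref{D3.4} asks for an honest commuting square while (1) only asks for a natural isomorphism; but since one can replace $Q$ by the functor $x\mapsto\pi(S)\hat\phi_G(x,\lambda_x)$ (transporting the functoriality of $\hat\phi_G$ along $\pi(T)$, using that $\pi(T)$ is the identity on objects in the relevant sense for $\rho(T)$-free algebras — or more carefully, by the argument in the proof of Theorem \ref{P3.6b}), the PBW-property in the sense of Definition \ref{D3.4} holds. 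Then Theorem \ref{P3.6b} applies verbatim: $\nu':SGT\xrightarrow{S\phi_G}S^2G\xrightarrow{\mu(S)G}SG$ exhibits $SG$ as the free $T$-module $QT$ with values in $\mathscr C$, which is exactly statement (2).

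\smallskip
\textbf{Proof that (2) $\Rightarrow$ (1).} Suppose $SG\in Mod_T^{\mathscr C}$ is isomorphic to a free $T$-module, say $SG\cong QT$ for some $Q:\mathscr D\longrightarrow\mathscr C$. This is precisely the hypothesis of Theorem \ref{P3.7b} (indeed Theorem \ref{P3.7b} as stated only concludes that $SG$ \emph{is} isomorphic to some free module, but its proof actually shows, for any chosen $Q$ with $SG\cong QT$, that $Q\pi(T)=\pi(S)\hat\phi_G$ — equivalently the coequalizer \eqref{coe313} computing $\hat\phi_G(x,\lambda_x)$ equals $Q(x)$). Thus for every $(x,\lambda_x)\in\mathscr D^T$ we get $\hat\phi_G(x,\lambda_x)\cong Q(x)$ in $\mathscr C$; the split-coequalizer description \eqref{sfork315} is natural in $(x,\lambda_x)$, so these isomorphisms are natural with respect to morphisms of $T$-algebras. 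This yields (1).

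\smallskip
\textbf{What I expect to be the main obstacle.} The routine parts — chasing the coequalizer and split-fork diagrams — are already dispatched in Theorems \ref{P3.6b} and \ref{P3.7b}, so no new computation is needed. The only genuinely delicate point is the bookkeeping in the direction (1) $\Rightarrow$ (2): reconciling the ``natural isomorphism'' formulation of (1) with the ``strict commuting square'' in Definition \ref{D3.4} so that Theorem \ref{P3.6b} can be invoked. Concretely one must check that choosing $Q(x):=\pi(S)\hat\phi_G(x,\lambda_x)$ gives a well-defined functor on $\mathscr D$ (not just on $\mathscr D^T$): this uses that the underlying object of $\hat\phi_G(x,\lambda_x)$ does not depend on the chosen algebra structure $\lambda_x$, which is exactly what (1) asserts, so that $Q$ descends along $\pi(T)$. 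Once that is observed, everything else is an application of the earlier results, and I would keep the proof short by simply citing them.
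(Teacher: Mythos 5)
Your proof is correct and follows essentially the same route as the paper, which likewise disposes of (1) $\Rightarrow$ (2) by Theorem \ref{P3.6b} and (2) $\Rightarrow$ (1) by Theorem \ref{P3.7b}. Your extra remarks — that (1) must be reconciled with the strict square in Definition \ref{D3.4}, and that Theorem \ref{P3.7b} as stated omits the hypothesis its proof actually uses (it assumes a $Q$ with $SG\cong QT$ and concludes $Q\pi(T)=\pi(S)\hat\phi_G$) — are accurate readings of what the cited results really deliver, and only make the citation more honest than the paper's own two-line proof.
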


\begin{proof}
(1) $\Rightarrow$ (2) follows from Proposition \ref{P3.6b} and (2) $\Rightarrow$ (1) follows from Proposition \ref{P3.7b}.
\end{proof}

\small

\begin{bibdiv}
	\begin{biblist}

\bib{AK}{article}{
   author={Ad\'{a}mek, J.},
   author={Koubek, V.},
   title={Are colimits of algebras simple to construct?},
   journal={J. Algebra},
   volume={66},
   date={1980},
   number={1},
   pages={226--250},
}

\bib{BW}{book}{
   author={Barr, M.},
   author={Wells, C.},
   title={Toposes, triples and theories},
   series={Grundlehren der mathematischen Wissenschaften},
   volume={278},
   publisher={Springer-Verlag, New York},
   date={1985},
   pages={xiii+345},
}
	
	\bib{Dot}{article}{
   author={Dotsenko, V.},
   author={Tamaroff, P.},
   title={Endofunctors and Poincar\'{e}-Birkhoff-Witt theorems},
   journal={Int. Math. Res. Not. },
   date={2021},
   number={16},
   pages={12670--12690},
}

\bib{Lint}{article}{
   author={Linton, F. E. J.},
   title={Coequalizers in categories of algebras},
   conference={
      title={Sem. on Triples and Categorical Homology Theory (ETH, Z\"{u}rich,
      1966/67)},
   },
   book={
      publisher={Springer, Berlin},
   },
   date={1969},
   pages={75--90},
}

	\end{biblist}
	
	\end{bibdiv}

\end{document}